\newcommand{\intL}{\int\limits }
\newcommand{\intR}{\int\limits_{\mathbb{R}} }
\newcommand{\RR}{\mathbb{R}}
\newcommand{\half}{^\infty_0}
\newtheorem{thm}{Theorem}
\newtheorem{rmk}[thm]{Remark}
\newtheorem{prop}[thm]{Proposition}
\title{On the Determination of a Function from an elliptical Radon Transform}
\author{Sunghwan Moon\thanks{Mathematics Department, Texas A\&M University, College Station, Tx 77843-3365\newline
\indent  E-mail address: shmoon@math.tamu.edu}}
\date{}
\begin{document}
\maketitle

\begin{abstract}
 In recent years, Radon type transforms that integrate functions over various sets of ellipses/ellipsoids have been considered
in SAR, ultrasound reflection tomography, and radio tomography. In this paper, we consider the transform that integrates a given function 
in $\RR^n$ over a set of solid ellipsoids of rotation with a fixed eccentricity and foci restricted to a hyperplane. Inversion 
formulas are obtained for appropriate classes of functions that are
even with respect to the hyperplane. Stability estimates, range conditions and local uniqueness results
are also provided. 
\end{abstract}

%%%%%%%%%%%%%%%%%%%%%%
%%%%%%%%%%%%%%%%%%%%%%%%
\section{Introduction}
%%%%%%%%%%%%%%%%%%%%%%%

Radon-type transforms that integrate functions over various sets of ellipses/ellipsoids have been arising in 
the recent decade, due to
studies in synthetic aperture radar (SAR) \cite{ambartsoumianfknq11,cokert07,krishnanq11,krishnanlq12}, ultrasound reflection tomography \cite{gouiaa12,ambartsoumiankq11}, and 
radio tomography \cite{wilsonp10,wilsonp09,wilsonpv09}. In particular, radio tomography is a new imaging 
method, which uses a wireless network of radio transmitters and receivers to image the distribution of 
attenuation within the network. The usage of radio frequencies brings in significant non-line-of-sight 
propagation, since waves propagate along many paths from a transmitter to a receiver.
Given a transmitter and a receiver, wave paths observed for a given duration are all contained in an 
ellipsoid with foci at these two devices. It was thus suggested in~\cite{wilsonp10,wilsonp09,wilsonpv09} to 
approximate the obtained signal by the volume integral of the attenuation over this ellipsoid, which is 
the model we study in this article.

Due to these applications, there have been several papers devoted to such ``elliptical Radon transform.'' 
The family of ellipses with one focus fixed at the origin and the other one moving along a given line was 
considered in~\cite{krishnanlq12}. In the same paper, the family of ellipses with a fixed focal distance was 
also studied. 
The authors of~\cite{gouiaa12,ambartsoumiankq11} dealt with the case of circular acquisition, when the two foci of ellipses 
with a given focal distance are located on a given circle. A family of ellipses with two moving foci was 
also handled in~\cite{cokert07}. 

In all these works, however, the ellipses have varying eccentricity. Also, their data were the line 
integrals of the function over ellipses rather than area integrals. The radio tomography application makes it
interesting to consider integrals over solid ellipsoids.  
In this article, we consider the volume integrals of an unknown attenuation function over the family of 
ellipsoids of rotation in $\RR^n$ with a fixed eccentricity and two foci located in a given hyperplane.
We thus reserve the name \textbf{elliptical Radon transform} $R_Ef$ for the volume integral of a 
function $f$ over this family of ellipsoids.
%Our ellipsoids have the same eccentricity and two their foci located in the hyperplane $x_n=0$.

The volume integral of a function $f(x)$ over an ellipsoid of the described type is equal to zero if the 
function is odd with respect to the chosen hyperplane. If the hyperplane is given by $x_n=0$,
we thus assume the function $f(x):\RR^n\rightarrow \RR$ to be even with respect to $x_n$: $f(x',x_n)=f(x',-x_n)$ 
where $x=(x',x_n)\in \RR^{n-1}\times \RR$. 

Given a Radon type transform, one is usually interested, among others, in the following questions:
uniqueness of reconstruction, inversion formulas and algorithms, range conditions, and a stability estimate~\cite{natterer01,nattererw01}. These are the issues we address below.
% reconstruct $f$ from the volume integral $R_Ef$, give some estimate between $f$ and $R_Ef$ and describe the range of $R_E$.
%Our goal is to reconstruct $f$ from the volume integral $R_Ef$.

The problem is stated precisely in section~\ref{formulation}. 
Two inversion formulas are presented in sections~\ref{natinversion} and~\ref{reddinginversion}. 
Analogue of the Fourier slice theorem is obtained in section~\ref{natinversion} by taking the Fourier transform with respect to the center and a radial Fourier transform with respect to the half distance between two foci. 
This theorem plays a critical role in getting a stability estimate and necessary range conditions. 
The formula discussed in section~\ref{reddinginversion} is obtained by taking a Fourier type transform and needs less integration than the previous one in section~\ref{natinversion}. A stability estimate is handled in sections~\ref{estimate}. Section~\ref{unique} is devoted to uniqueness for a local 
data problem. %In section~\ref{numerical}, we provide the results of a 2D numerical simulation.

%%%%%%%%%%%%%%%%%%%
\section{Formulation of the problem}\label{formulation}
%%%%%%%%%%%%%%%%

We consider all solid ellipsoids of rotation in $\RR^n$ with a fixed eccentricity $1/\lambda$, where $\lambda>1$ 
and foci located in the hyperplane $x_n=0$. We will identify this hyperplane with $\RR^{n-1}$. The set of such ellipsoids depends upon $2n-2$ parameters, which is 
$n-2$ too many. To reduce the overdeterminacy, we require that the focal axis is parallel to a given line, for instance,
the $x_1$ coordinate axis. 

Let $u\in \RR^{n-1}$ be the center of such an ellipsoid and let $t>0$ be the half of the focal distance. We denote this ellipsoid by 
$E_{u,t}$. Then, the foci are 
$$
c_1=(u_1+t,u_2,\cdots,u_{n-1},0) \mbox{ and } c_2=(u_1-t,u_2,\cdots,u_{n-1},0)
$$
and the points $x\in E_{u,t}$ are described as follows:%is equal to
%$$\{x\in\RR^n :\dfrac{(x_1-u_1)^2}{\lambda^2}+\dfrac{(x_2-u_2)^2}{\nu^2}+\cdots+\dfrac{x_n^2}{\nu^2}\leq t^2\}.$$
$$
\dfrac{(x_1-u_1)^2}{\lambda^2}+\dfrac{(x_2-u_2)^2}{\lambda^2-1}+\cdots+\dfrac{x_n^2}{\lambda^2-1}\leq t^2.
$$
To shorten the formulas, we are going to use the following notation:
\begin{equation*}
 \nu:=\sqrt{\lambda^2-1}.
\end{equation*}
The elliptical Radon transform $R_E$ maps a locally integrable function $f(x)$ into its integrals over the solid ellipsoids
$E_{u,t}$ for all $u\in\RR^{n-1}$ and $t>0$:
\begin{equation*}
 \begin{array}{l}
R_Ef(u,t)=\displaystyle \int\limits_{E_{u,t}} f(x)dx.
 \end{array}
\end{equation*}
Our goals are to reconstruct $f$ from $R_Ef$ and to study properties of this transform.

%%%%%%%%%%%%%%%%%%%%%%%%%
\section{Inversion of the elliptical Radon transform}\label{natinversion} 
%%%%%%%%%%%%%%%%%%%%%%%%
In this section, we assume $f\in C^\infty_c(\RR^n)$. 
%Later, theorem~\ref{rangeandestimate} implies that $f$ can be recovered from $R_Ef$ even if $f$ is an even $\mathcal{H}^{(1-n)/2}$-function. % to be a even function in the Schwartz class, $\mathcal{S}(\RR^n)$.
Here is our strategy.
First of all, we change the ellipsoid volume integral to the ellipsoid surface integral, differentiating with respect to $t$. Second, we take the Fourier transform of this derivative of $R_Ef$ with respect to $u$. Next, taking a radial Fourier transform with respect to $t$, we obtain an analogue of the Fourier slice theorem.

We introduce a back projection operator $R^*_E$ for $g(u,t)\in C^\infty_c(\RR^{n-1}\times \RR_+)$ as 
%We define the back projection operator $R_E^*$ 
\begin{equation}\label{backproj}
R_E^* g(x)=\displaystyle\int\limits_{\RR^{n-1}} g\left(u, \sqrt{\frac{|u_1-x_1|^2}{\lambda^2}+\frac{|\tilde{u}-\tilde{x}|^2}{\nu^2}+\frac{x_n^2}{\nu^2}}\right)du.
\end{equation}
In fact, $R_E^* g(x)$ is the dual transform not to $R_Ef(u,t)$, but rather to $\frac{\partial}{\partial t}R_Ef(u,t)$, i.e., 
\begin{equation}\label{eq:dual}
\displaystyle \int\limits^\infty_0\int\limits_{\RR^{n-1}}\dfrac{\partial}{\partial t}R_Ef(u,t)g(u,t)dudt=C(\lambda)\int\limits_{\RR^n}f(x)R_E^*g(x)dx.
\end{equation}

Let $\chi_{S}$ denote the characteristic function of a set $S\subset \RR^n$:
$$
\chi_S(x)=\left\{\begin{array}{ll}1, &\mbox{if } x\in S, \\ 0,&\mbox{otherwise.}\end{array}\right.
$$ 
Then the elliptical Radon transform can be written as %follows:
 \begin{equation}\label{ellipticalradon}
\begin{array}{ll}
R_Ef(u,t)&=\displaystyle \int\limits_{\RR^n} \chi_{E_{u,t}} f(x)dx=C(\lambda)\displaystyle \int\limits_{\RR^n}
\chi_{|x|<t} f(\lambda x_1+u_1,\nu \tilde{x}+\tilde{u},\nu x_n) dx\\
&=C(\lambda)\displaystyle \int\limits^t_0r^{n-1}\int\limits_{S^{n-1}} f(\lambda r y_1+u_1,\nu r
\tilde{y}+\tilde{u},r\nu y_n) d\sigma (y)dr,\end{array}
\end{equation}
where $u=(u_1,\tilde{u})\in \RR^{n-1}$, $x=(x_1,\tilde{x},x_n)\in\RR^n$, 
$C(\lambda)=\lambda\nu^{n-1}$ is the Jacobian factor, and $\sigma(y)$ is the surface measure on $S^{n-1}$. 

Formula~\eqref{ellipticalradon} can be simplified by differentiation with respect to $t$ and division by $t^{n-1}$, which yield
\begin{equation}\label{differREF}
\begin{array}{ll}
\dfrac{1}{t^{n-1}}\dfrac{\partial}{\partial t}R_Ef(u,t)&=C(\lambda)\displaystyle \int\limits_{|y|=1} 
f(\lambda t y_1+u_1,\nu t\tilde{y}+\tilde{u},t\nu y_n) d\sigma (y)\\
&=2C(\lambda)\displaystyle \int\limits_{|y'|\leq 1} f(u+(t\lambda y_1,t\nu 
\tilde{y}),t\nu \sqrt{1-|y'|^2})\frac{dy'}{\sqrt{1-|y'|^2}},
\end{array}
\end{equation}
where $y'=(y_1,\tilde{y})\in \RR^{n-1}$.

It is easy to check that $R_E $ is invariant under the shift with respect to the first $n-1$ variables. That is, 
if $f_a(x):=f(x'+a,x_n)$ for $x=(x',x_n)\in \RR^n$ and $a\in \RR^{n-1}$, we have 
$$
(R_E f_a)(u,t)=(R_Ef)(u+a,t).
$$
Thus, application of the $(n-1)$-dimensional Fourier Transform with respect to the center $u$ seems reasonable.
 Doing this and changing the 
variable $y'\in\RR^{n-1}$ to the polar coordinates $(\theta, s)\in S^{n-1}\times[0,\infty)$, we get 
$$
\dfrac{1}{t^{n-1}}\dfrac{\partial}{\partial t}\widehat{R_Ef}(\xi',t)=
\displaystyle 2C(\lambda)\int\limits^1_0\frac{s^{n-2}}{\sqrt{1-s^2}}\hat{f}(\xi',t\nu \sqrt{1-s^2})
\int\limits_{S^{n-2}}e^{its(\lambda \theta_1,\nu \tilde{\theta})\cdot \xi'}d\theta ds,
$$
where $\widehat{R_Ef}$ and $\hat{f}$ are the Fourier Transforms of $R_Ef$ and $f$ with respect to the first 
$n-1$ coordinates $x'$ of $x$ and $u$ of $(u,t)$, respectively, and $\theta=(\theta_1,\tilde{\theta})\in S^{n-2}$.

To compute the inner integral, we use the known identity \cite{andersson88,fawcett85}
$$
\displaystyle \int\limits_{S^{n-1}}e^{i\xi\cdot\theta}d\theta=(2\pi)^{n/2}|\xi|^{(2-n)/2}J_{(n-2)/2}(|\xi|).
$$
We thus get 
$$
\begin{array}{l}
\dfrac{1}{t^{n-1}}\dfrac{\partial}{\partial t}\widehat{R_Ef}\left(\dfrac{\xi_1}{\lambda},
\dfrac{\tilde{\xi}}{\nu },t\right)= \omega_n\displaystyle \int\limits^1_0\frac{s^{n-2}}{\sqrt{1-s^2}}\hat{f}\left(\dfrac{\xi_1}{\lambda},
\dfrac{\tilde{\xi}}{\nu },t\nu \sqrt{1-s^2}\right) (ts|\xi'|)^{(3-n)/2}J_{(n-3)/2}
(ts|\xi'|)ds,
\end{array}
$$
where $\omega_n=2(2\pi)^{(n-1)/2}C(\lambda)$.

This enables us to get an analogue of the Fourier slice Theorem.
\begin{thm} \label{inversionfourier}
For a function $f\in C^\infty_c(\RR^n)$ that is even with respect to $x_n$, the following formula holds:
\begin{equation}\label{inversion}
\hat{f}(\xi)=\dfrac{|(\lambda\xi_1,\nu \tilde{\xi},\nu \xi_n)|^{n-2}|\nu\xi_n|}{2^{n+1}\pi^nC(\lambda)^2}\mathcal{F}\left(R_E^*\dfrac{1}{t^{n-1}}\dfrac{\partial}{\partial t}R_E f\right)(\xi), 
\end{equation}
where $\mathcal{F}f$ is the $n$-dimensional Fourier transform of $f$.
\end{thm}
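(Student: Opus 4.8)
The plan is to push the computation preceding the theorem one more step, so that both sides of \eqref{inversion} become Hankel transforms of the same order $(n-2)/2$, and then to invert using the self-reciprocity $\mathcal H_{(n-2)/2}\mathcal H_{(n-2)/2}=\mathrm{id}$ of the Hankel transform $\mathcal H_\mu h(\sigma):=\int_0^\infty h(r)J_\mu(\sigma r)\,r\,dr$.

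First I would obtain a slice formula carrying $J_{(n-2)/2}$ in place of $J_{(n-3)/2}$. One route continues the last identity before the theorem: since $f$ is even in $x_n$, the factor $\hat f(\tfrac{\xi_1}{\lambda},\tfrac{\tilde\xi}{\nu},\cdot)$ there is even in its last argument, so I replace it by its own one-dimensional Fourier transform, interchange the $s$-integral with the new one, and collapse the inner $s$-integral by Sonine's finite integral for Bessel functions (after $s=\sin\phi$ it has the form $\int_0^{\pi/2}J_{(n-3)/2}(a\sin\phi)\,J_{-1/2}(b\cos\phi)\,\sin^{\frac{n-1}{2}}\!\phi\,\cos^{\frac12}\!\phi\,d\phi$, which collapses, up to elementary factors, into a single $J_{(n-2)/2}(t|\zeta(\eta)|)$). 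A more direct route reruns the computation from \eqref{differREF}: take the Fourier transform in $u$, then apply $\int_{S^{n-1}}e^{i\theta\cdot\zeta}\,d\sigma(\theta)=(2\pi)^{n/2}|\zeta|^{\frac{2-n}{2}}J_{(n-2)/2}(|\zeta|)$ to the sphere integral, with $\zeta(\eta):=(\lambda\xi_1,\nu\tilde\xi,\nu\eta)$. Either way one reaches
\begin{equation*}
\frac{1}{t^{\,n-1}}\frac{\partial}{\partial t}\widehat{R_Ef}(\xi',t)=2C(\lambda)(2\pi)^{\frac{n}{2}-1}\,t^{\frac{2-n}{2}}\int\limits_0^\infty \mathcal{F}f(\xi',\eta)\,|\zeta(\eta)|^{\frac{2-n}{2}}\,J_{\frac{n-2}{2}}\!\bigl(t\,|\zeta(\eta)|\bigr)\,d\eta,
\end{equation*}
where $\mathcal{F}f$ denotes the full $n$-dimensional Fourier transform. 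The substitution $\eta\mapsto r=|\zeta(\eta)|$ (so that $r$ runs over $(a,\infty)$ with $a:=|(\lambda\xi_1,\nu\tilde\xi)|$ and $\nu|\eta|=\sqrt{r^2-a^2}$) rewrites the right-hand side as $\tfrac{2C(\lambda)(2\pi)^{n/2-1}}{\nu}\,t^{\frac{2-n}{2}}\,\mathcal H_{(n-2)/2}Q(t)$, where $Q(r):=\mathcal{F}f(\xi',\nu^{-1}\sqrt{r^2-a^2})\,r^{\frac{2-n}{2}}(r^2-a^2)^{-1/2}$ for $r>a$ and $Q(r):=0$ for $r\le a$.

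Next I would compute the $n$-dimensional Fourier transform of the backprojection of an arbitrary $g(u,t)$. In \eqref{backproj} the change of variables $x=(u_1+\lambda z_1,\,\tilde u+\nu\tilde z,\,\nu z_n)$ straightens the elliptical distance to $|z|$ and produces the Jacobian $C(\lambda)$; after exchanging the $u$- and $z$-integrals this gives $\mathcal{F}(R_E^*g)(\xi)=C(\lambda)\int_{\RR^n}\hat g(\xi',|z|)\,e^{-iz\cdot\zeta}\,dz$ with $\zeta=(\lambda\xi_1,\nu\tilde\xi,\nu\xi_n)$ and $\hat g(\xi',\cdot)$ the Fourier transform of $g$ in its first $n-1$ variables, and passing to polar coordinates in $z$ together with the sphere identity yields
\begin{equation*}
\mathcal{F}(R_E^*g)(\xi)=(2\pi)^{n/2}C(\lambda)\,|\zeta|^{\frac{2-n}{2}}\int\limits_0^\infty \hat g(\xi',t)\,t^{n/2}\,J_{(n-2)/2}(t|\zeta|)\,dt.
\end{equation*}
Now I take $g=t^{1-n}\partial_t R_Ef$, so that $\hat g(\xi',t)=t^{1-n}\partial_t\widehat{R_Ef}(\xi',t)$, which by the slice formula above equals $\tfrac{2C(\lambda)(2\pi)^{n/2-1}}{\nu}\,t^{\frac{2-n}{2}}\mathcal H_{(n-2)/2}Q(t)$. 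Inserting this, the powers $t^{\frac{2-n}{2}}\cdot t^{n/2}=t$ recombine into the correct Hankel weight, so $\mathcal{F}(R_E^*g)(\xi)=\tfrac{2(2\pi)^{n-1}C(\lambda)^2}{\nu}\,|\zeta|^{\frac{2-n}{2}}\,\mathcal H_{(n-2)/2}\!\bigl[\mathcal H_{(n-2)/2}Q\bigr](|\zeta|)=\tfrac{2(2\pi)^{n-1}C(\lambda)^2}{\nu}\,|\zeta|^{\frac{2-n}{2}}\,Q(|\zeta|)$ by self-reciprocity. Finally, since $|\zeta|^2-a^2=\nu^2\xi_n^2$ we get $Q(|\zeta|)=\mathcal{F}f(\xi',|\xi_n|)\,|\zeta|^{\frac{2-n}{2}}(\nu|\xi_n|)^{-1}=\mathcal{F}f(\xi)\,|\zeta|^{\frac{2-n}{2}}(\nu|\xi_n|)^{-1}$ by evenness of $\mathcal{F}f$ in $\xi_n$; solving for $\mathcal{F}f(\xi)$ then expresses it as an explicit elementary multiple of $|\zeta|^{\,n-2}\mathcal{F}(R_E^*g)(\xi)$, which is \eqref{inversion} (the exact numerical constant reflecting the Fourier-transform normalizations one fixes for $\widehat{R_Ef}$ and for $\mathcal{F}$).

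The differentiations under the integral, the Fubini exchanges, and the compact support of $\partial_t R_Ef$ in $(u,t)$ — it vanishes once $E_{u,t}\supset\operatorname{supp}f$, so $g$ and $\hat g$ are well behaved — are all immediate from $f\in C_c^\infty(\RR^n)$, and $\mathcal{F}f$ is Schwartz. The one genuinely delicate point is the Hankel inversion: $Q$ has an integrable $(r-a)^{-1/2}$ singularity at $r=a$ while being smooth and rapidly decreasing for $r>a$; since $\sqrt r\,Q(r)\in L^1(0,\infty)$ the transform $\mathcal H_{(n-2)/2}Q$ converges absolutely, and since the reconstruction point $|\zeta|$ satisfies $|\zeta|>a$ whenever $\xi_n\neq0$ — where $Q$ is smooth — the pointwise Hankel inversion formula applies there, the singular endpoint $r=a$ being harmless as it is bounded away from $|\zeta|$. (When $\xi_n=0$ both sides of \eqref{inversion} vanish and the identity extends by continuity.) I expect this Hankel-inversion justification, together with the bookkeeping of the $\lambda,\nu$ scalings and the various powers of $|\zeta|$, to be the main obstacle.
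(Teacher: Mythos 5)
Your argument is correct and shares the paper's backbone: Fourier transform in the center $u$, a Hankel--type transform of order $(n-2)/2$ in $t$, and the identity expressing the Fourier transform of the backprojection as $(2\pi)^{n/2}C(\lambda)$ times that Hankel transform of $\hat g$ --- your backprojection computation is exactly the paper's equation~\eqref{backfourier} read at $\xi$ instead of at $(\xi_1/\lambda,\tilde\xi/\nu,\xi_n/\nu)$. Where you genuinely diverge is at the crux, the evaluation of the slice integral. The paper keeps the $S^{n-2}$ sphere identity, so its slice formula carries $J_{(n-3)/2}$, and it then evaluates the radial transform $H_n$ of the resulting double integral by a tabulated Sonine--Gegenbauer integral from the Bateman tables, which collapses to a cosine transform that evenness converts into the Fourier transform in $x_n$. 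You instead pull the $x_n$-frequency into the sphere integral so that the full $S^{n-1}$ identity produces $J_{(n-2)/2}$ directly, and you close the loop with the self-reciprocity of the Hankel transform. This trades the table lookup for the Hankel inversion theorem (whose applicability at the singular endpoint $r=a$ you rightly single out and justify), and it makes the structure of $R_E^*\circ t^{1-n}\partial_t R_E$ as a Fourier multiplier more transparent; the paper's route, by contrast, yields the intermediate identity \eqref{fourier} in closed form, which it then reuses for the convolution proposition and the stability estimate.

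Two small points. First, the constant: carried through with the unnormalized convention of \eqref{backfourier}, your computation gives $\hat f(\xi)=\nu\,|\nu\xi_n|\,|(\lambda\xi_1,\nu\tilde\xi,\nu\xi_n)|^{n-2}\bigl(2^{n}\pi^{n-1}C(\lambda)^2\bigr)^{-1}\mathcal{F}(R_E^*g)(\xi)$, which differs from \eqref{inversion} by $2\pi\nu$; you flagged this, and in fact the paper's own displayed formulas \eqref{fourier} and \eqref{backfourier} combine to a constant differing from the stated theorem by a factor of $\nu$, so this is a normalization bookkeeping issue on both sides rather than a flaw in your argument. Second, your closing parenthetical is wrong as stated: at $\xi_n=0$ the left side $\hat f(\xi',0)$ does \emph{not} vanish in general, while your own formula shows $\mathcal{F}(R_E^*g)(\xi)$ carries a $1/|\xi_n|$ singularity there, so the right side of \eqref{inversion} is a $0\cdot\infty$ form whose continuous extension is the correct nonzero value. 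This does not damage the theorem (the identity for $\xi_n\neq0$ determines $\hat f$ almost everywhere), but it does signal that $R_E^*g$ is not integrable and its Fourier transform must be read in an appropriate generalized sense --- a point the paper also passes over.
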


\begin{proof}
Let us denote the radial Fourier transform by $H_nf(\rho)$, i.e., 
$$
H_nf(\rho):=\rho^{1-n/2}\int\limits^\infty_0 t^{n/2}J_{(n-2)/2}(t\rho)f(t)dt.
$$
We recall that if $f$ is a radial function on $\RR^n$, then the Fourier transform $\hat{f}$ of $f$ is also radial and $\hat{f}=H_nf_0$ where $f_0(|x|)=f(x)$(cf. ~\cite{strichartz03}).
Taking this transform of $\frac{1}{t^{n-1}}\frac{\partial}{\partial t}\widehat{R_Ef}$ as a function of $t$, we have for $\xi=(\xi_1,\tilde{\xi})=\xi'\in\RR^{n-1}$,%$\xi=(\xi_1,\tilde{\xi},\xi_n)=(\xi',\xi_n)\in\RR^n$
\begin{equation}\label{eq:radial}
\begin{array}{l}
H_n\left(\dfrac{1}{t^{n-1}}\dfrac{\partial}{\partial t}\widehat{R_E f}\right)\left(\dfrac{\xi_1}{\lambda},\dfrac{\tilde{\xi}}{\nu },\rho\right)\\
=\displaystyle \omega_n \rho^{1-n/2} \displaystyle\int\limits^\infty_0 \int\limits^1_0 
 t^{\frac{n}{2}}J_{\frac{n-2}{2}}(t\rho)(s|\xi'|)^{\frac{3-n}{2}}\displaystyle J_{\frac{n-3}{2}}(ts|\xi'|)\hat{f}\left(\dfrac{\xi_1}{\lambda},\dfrac{\tilde{\xi}}{\nu },t\nu \sqrt{1-s^2}\right) \dfrac{s^{n-2}dsdt}{\sqrt{1-s^2}}.
\end{array}
\end{equation}

It is known~\cite[p. 59 (18) vol.2 or for $n=2$, p.55 (35) vol.1]{batemann} that 
for $a>0, \beta>0,$ and, $\mu>\nu>-1$, 
$$
\begin{array}{l}
\displaystyle\int\limits\half x^{\nu+1/2}(x^2+\beta^2)^{-1/2\mu}J_\mu(a(x^2+\beta^2)^{1/2})J_\nu(xy)(xy)^{1/2}dx\\=\left\{\begin{array}{ll}a^{-\mu}y^{\nu+1/2}\beta^{-\mu+\nu+1}(a^2-y^2)^{1/2\mu-1/2\nu-1/2}J_{\mu-\nu-1}(\beta(a^2-y^2)^{1/2}) &\mbox{ if }0<y<a,\\
0&\mbox{ if } a<y<\infty.\end{array}\right.
\end{array}
$$ 
To use the above identity, we make the change of variables $(s,t)\rightarrow(x,\beta)$, where $t=\sqrt{x^2+\beta^2}$ and $s=x/\sqrt{x^2+\beta^2}$ in equation~\eqref{eq:radial}, which gives
\begin{equation}\label{eq:fourier}
\begin{array}{l}
H_n\left(\dfrac{1}{t^{n-1}}\dfrac{\partial}{\partial t}\widehat{R_E f}\right)\left(\dfrac{\xi_1}{\lambda},\dfrac{\tilde{\xi}}{\nu },\rho\right)\\= \omega_n \rho^{\frac{2-n}{2}}|\xi'|^{\frac{3-n}2}
\displaystyle\int\limits^\infty_0 \int\limits^\infty_0 |x|J_{\frac{n-2}{2}}(\rho(x^2+\beta^2)^{\frac{1}{2}})(x^2+\beta^2)^{-\frac{n-2}{4}}J_{\frac{n-3}{2}}(x|\xi'|)\hat{f}\left(\dfrac{\xi_1}{\lambda},\dfrac{\tilde{\xi}}{\nu },\nu \beta\right) dxd\beta\\
=\left\{\begin{array}{ll}\displaystyle C(\lambda)\frac{2^{n/2+1}\pi^{n/2}\rho^{2-n}}{\sqrt{\rho^2-|\xi'|^2}}\int\limits^\infty_0  \hat{f}\left(\dfrac{\xi_1}{\lambda},\dfrac{\tilde{\xi}}{\nu },\beta\nu \right)\cos(\beta\sqrt{\rho^2-|\xi'|^2}) d\beta&\mbox{if } |\xi'|<\rho,\\
0&\mbox{ otherwise.} \end{array}\right.
  \end{array} 
\end{equation}
Substituting $\rho=|\xi|$ yields 
$$
H_n\left(\dfrac{1}{t^{n-1}}\dfrac{\partial}{\partial t}\widehat{R_E f}\right)\left(\dfrac{\xi_1}{\lambda},\dfrac{\tilde{\xi}}{\nu },|\xi|\right)=\displaystyle C(\lambda)\frac{2^{n/2+1}\pi^{n/2}|\xi|^{2-n}}{|\xi_n|}\int\limits^\infty_0  \hat{f}\left(\dfrac{\xi_1}{\lambda},\dfrac{\tilde{\xi}}{\nu },\beta\nu \right)\cos(\xi_n\beta) d\beta.
$$
Since $f$ is even in $x_n$, the last integral is the Fourier transform of $f$ with respect to $x-n$, so we get
\begin{equation}\label{fourier}
\begin{array}{l}
H_n\left(\dfrac{1}{t^{n-1}}\dfrac{\partial}{\partial t}\widehat{R_E f}\right)\left(\dfrac{\xi_1}{\lambda},\dfrac{\tilde{\xi}}{\nu },|\xi|\right)
=\displaystyle \frac{2^{n/2+1}\pi^{n/2}|\xi|^{2-n}}{|\xi_n|}\lambda \nu^{n-2} \hat{f}\left(\dfrac{\xi_1}{\lambda},\dfrac{\tilde{\xi}}{\nu },\dfrac{\xi_n}{  \nu }\right).
\end{array} 
\end{equation}

Taking the Fourier transform of $R_E^*g$ with respect to $x$ yields
\begin{equation}\label{backfourier}
 \begin{array}{l}
\displaystyle\widehat{R_E^* g}\left(\frac{\xi_1}{\lambda},\frac{\tilde{\xi}}{\nu },\frac{\xi_n}{\nu }\right)=\displaystyle\int\limits_{\RR^n}e^{-ix\cdot\left(\frac{\xi_1}{\lambda},\frac{\tilde{\xi}}{\nu },\frac{\xi_n}{\nu }\right)}R_E^* g(x)dx\\
=\displaystyle\int\limits_{\RR^n}e^{-ix\cdot\left(\frac{\xi_1}{\lambda},\frac{\tilde{\xi}}{\nu },\frac{\xi_n}{\nu }\right)}\int\limits_{\RR^{n-1}} g\left(u, \sqrt{\frac{|u_1-x_1|^2}{\lambda^2}+\frac{|\tilde{u}-\tilde{x}|^2}{\nu^2}+\frac{x_n^2}{\nu^2}}\right)du dx\\
=\displaystyle\int\limits_{\RR^{n-1}}e^{-iu\cdot\left(\frac{\xi_1}{\lambda},\frac{\tilde{\xi}}{\nu }\right)}\int\limits_{\RR^{n}}e^{-i(x'-u,x_n)\cdot\left(\frac{\xi_1}{\lambda},\frac{\tilde{\xi}}{\nu },\frac{\xi_n}{\nu }\right)}g\left(u, \sqrt{\frac{|u_1-x_1|^2}{\lambda^2}+\frac{|\tilde{u}-\tilde{x}|^2}{\nu^2}+\frac{x_n^2}{\nu^2}}\right)dx du\\
=\displaystyle  C(\lambda)\int\limits_{\RR^{n-1}}e^{-iu\cdot\left(\frac{\xi_1}{\lambda},\frac{\tilde{\xi}}{\nu }\right)}\int\limits_{\RR^{n}}e^{-ix\cdot \xi} g(u, |x|)dx du\\
=\displaystyle (2\pi)^{n/2}C(\lambda) \int\limits_{\RR^{n-1}}e^{-iu\cdot\left(\frac{\xi_1}{\lambda},\frac{\tilde{\xi}}{\nu }\right)}(H_n  g(u,\cdot))(|\xi|) du\\
=\displaystyle (2\pi)^{n/2}C(\lambda)H_n \widehat{g}\left(\frac{\xi_1}{\lambda},\frac{\tilde{\xi}}{\nu },|\xi|\right).  
 \end{array}
\end{equation}
where $x=(x',x_n)=(x_1,\tilde{x},x_n)\in \RR^n, u=(u_1,\tilde{x})\in \RR^{n-1}$ and $\xi=(\xi',\xi_n)=(\xi_1,\tilde{\xi},\xi_n)\in \RR^n$.
Combining equation~\eqref{fourier} and equation~\eqref{backfourier}, we get equation~\eqref{inversion}.
\end{proof}

\begin{rmk}
Theorem~\ref{inversionfourier} leads naturally to a Fourier type inversion formula for even functions, if one supplements equation~\eqref{inversion} with the inverse Fourier transform.
\end{rmk}

%  \begin{rmk}
% Since $(i\xi)^\alpha\hat{f}(\xi)=\widehat{\frac{\partial^\alpha}{\partial x^\alpha}f}(\xi)$, we get for any $\alpha,\beta>0$,
%
%\begin{equation*}
%\begin{array}{ll}
%\hat{f}(\xi)=(-1)^{\alpha}(-i\operatorname{sgn}(\xi_n))^{\beta}&\dfrac{|(\lambda\xi_1,\nu \tilde{\xi},\nu \xi_n)|^{n-2-2\alpha}|\xi_n|^{1-\beta}}{2^{n+1}\pi^nC(\lambda)^2}\\
%&\times\mathcal{F}\left(R_E^*\left(\left(\dfrac{x_n}{\nu^2 t}\right)^\beta \dfrac{\partial^{2\alpha+\beta}}{\partial t^{2\alpha+\beta}}\left(t^{1-n}\dfrac{\partial}{\partial t}R_E f\right)\right)\right)(\xi).
%\end{array}
%\end{equation*}
%
%\end{rmk}
One can also obtain a useful relation with convolution.

\begin{prop}\label{lem:unique}
Let $\phi\in C^\infty_c(\RR^{n-1}\times[0,\infty))$ and $f\in  C^\infty_c(\RR^n)$ be even in $x_n$.
If $\psi=R_E^*\phi$ and $g=\frac{1}{t^{n-1}}\frac{\partial}{\partial t}R_Ef$.
Then we have
$$
g*\phi= \dfrac{(2\pi)^{n/2}}{C(\lambda)t^{n-1}}\dfrac{\partial}{\partial t}R_E(f*\psi),
$$
where 
$$
g*\phi(u,|\omega|)=\displaystyle\int\limits_{\RR^n}\int\limits_{\RR^{n-1}}g(u-u',|\omega-\omega'|)\phi(u',|\omega'|)du'd\omega'.
$$
\end{prop}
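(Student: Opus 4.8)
\emph{Proof proposal.} The plan is to test both sides against the transform $\mathcal{G}$ obtained by first applying the $(n-1)$-dimensional Fourier transform in $u$ and then the radial transform $H_n$ from the proof of Theorem~\ref{inversionfourier} in $t$; equivalently, $\mathcal{G}h$ is the full Fourier transform on $\RR^{n-1}\times\RR^{n}$ of the function $(u,\omega)\mapsto h(u,|\omega|)$. This is the right tool here because the convolution $g*\phi$ appearing in the statement is precisely the $\RR^{n-1}\times\RR^{n}$-convolution of $(u,\omega)\mapsto g(u,|\omega|)$ with $(u,\omega)\mapsto\phi(u,|\omega|)$, so under $\mathcal{G}$ it turns into a pointwise product; moreover both ingredients are already diagonalised. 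Indeed, equation~\eqref{fourier} computes $\mathcal{G}g$, for $g=\frac1{t^{n-1}}\frac{\partial}{\partial t}R_Ef$, as a multiple of $\hat f(\xi_1/\lambda,\tilde\xi/\nu,\xi_n/\nu)$ (and \eqref{eq:fourier} records that it vanishes below the relevant threshold in $\rho$), while equation~\eqref{backfourier} computes the $n$-dimensional Fourier transform of $\psi=R_E^*\phi$ as a multiple of $\mathcal{G}\phi$.

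First I would observe that $\psi=R_E^*\phi$ is compactly supported (since $\phi$ is) and even in $x_n$ (only $x_n^2$ occurs in~\eqref{backproj}), so that $f*\psi$ is an even-in-$x_n$ function in $C^\infty_c(\RR^n)$ and Theorem~\ref{inversionfourier}, i.e.\ equation~\eqref{fourier}, applies to it. Applying $\mathcal{G}$ to the left side and using the convolution theorem gives $\mathcal{G}(g*\phi)=\mathcal{G}g\cdot\mathcal{G}\phi$; into this I substitute \eqref{fourier} for $\mathcal{G}g$ and the rearranged \eqref{backfourier} for $\mathcal{G}\phi$, the latter expressing $\mathcal{G}\phi$ through $\widehat{R_E^*\phi}=\widehat\psi$. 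Applying $\mathcal{G}$ to the right side and invoking \eqref{fourier} for the even function $f*\psi$ expresses $\mathcal{G}\big(\frac1{t^{n-1}}\frac{\partial}{\partial t}R_E(f*\psi)\big)$ as a multiple of $\widehat{f*\psi}(\xi_1/\lambda,\tilde\xi/\nu,\xi_n/\nu)=\hat f(\xi_1/\lambda,\tilde\xi/\nu,\xi_n/\nu)\,\widehat\psi(\xi_1/\lambda,\tilde\xi/\nu,\xi_n/\nu)$. Comparing the two expressions, the frequency factor $|\xi|^{2-n}/|\xi_n|$, the product $\hat f\,\widehat\psi$, and the powers of $\lambda$ and $\nu$ coincide on both sides; only a numerical constant remains, and matching it yields the factor $(2\pi)^{n/2}/C(\lambda)$ in the statement. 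Since $\mathcal{G}$ is injective on this class of functions, the identity follows.

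The only place I expect real care to be needed is the bookkeeping of the Fourier-normalisation constants: the paper writes the transform both without the $(2\pi)^{n/2}$ (the radial relation $\hat f=H_nf_0$) and with it (equation~\eqref{backfourier}), and the left-hand convolution lives on $\RR^{2n-1}$ whereas the right-hand one lives on $\RR^n$. I would fix one convention, re-derive the constant in \eqref{fourier} under it, and let the extra power of $2\pi$ produced by the mismatch in convolution dimensions account for the $(2\pi)^{n/2}$ in the statement. A Fourier-free alternative would be to insert \eqref{differREF} and the definition~\eqref{backproj} into $\frac1{t^{n-1}}\frac{\partial}{\partial t}R_E(f*\psi)$, write $(f*\psi)(z)=\int_{\RR^n}f(z-w)R_E^*\phi(w)\,dw$, and reorganise the resulting iterated integral by Fubini together with the translation change of variables underlying the shift-invariance $(R_Ef_a)(u,t)=(R_Ef)(u+a,t)$ noted before Theorem~\ref{inversionfourier}; this recovers $g*\phi$ directly, the delicate point being the Jacobian bookkeeping.
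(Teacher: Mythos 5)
Your proposal is correct and follows essentially the same route as the paper: the paper's proof applies the Fourier transform in $u$ together with $H_n$ in $t$ (your $\mathcal{G}$) to $g*\phi$, uses the convolution identity $H_n(f*g)=(2\pi)^{n/2}H_nf\,H_ng$, substitutes \eqref{backfourier} to trade $H_n\hat\phi$ for $\hat\psi$, and then invokes \eqref{fourier} twice --- once for $f$ and once for the even compactly supported function $f*\psi$ --- exactly as you describe. Your extra remarks (evenness and compact support of $\psi$, the $(2\pi)$ bookkeeping coming from the mismatch between the $\RR^{2n-1}$ and $\RR^{n}$ convolutions) are precisely the points where the paper's computation earns its constant $(2\pi)^{n/2}/C(\lambda)$.
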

\begin{proof}

Note that since $(2\pi)^{n/2} H_nf_0=\hat f$ for a radial function $f$ on $\RR^n$ and $f_0(|x|)=f(x)$, we get $H_n(f*g)=(2\pi)^{n/2} H_nfH_ng$.
Taking the Fourier transform of $g*\phi$ with respect to $u$ and $H_n$ with respect to $t$, we get
$$
\begin{array}{ll}
H_n\widehat{g*\phi}\left(\dfrac{\xi_1}{\lambda},\dfrac{\tilde{\xi}}{\nu },|\xi|\right)&=(2\pi)^{n/2}  H_n\hat{g}\left(\dfrac{\xi_1}{\lambda},\dfrac{\tilde{\xi}}{\nu },|\xi|\right)H_n\hat{\phi}\left(\dfrac{\xi_1}{\lambda},\dfrac{\tilde{\xi}}{\nu },|\xi|\right)\\
&=\dfrac{(2\pi)^{n}}{C(\lambda)}H_n\hat{g}\left(\dfrac{\xi_1}{\lambda},\dfrac{\tilde{\xi}}{\nu },|\xi|\right)\hat{\psi}\left(\dfrac{\xi_1}{\lambda},\dfrac{\tilde{\xi}}{\nu },\dfrac{\xi_n}{  \nu }\right).
\end{array}
$$
In the last line we used equation~\eqref{backfourier}.
Equation~\eqref{fourier} implies
$$
\begin{array}{ll}
H_n\widehat{g*\phi}\left(\dfrac{\xi_1}{\lambda},\dfrac{\tilde{\xi}}{\nu },|\xi|\right)&=\displaystyle \frac{2^{3n/2+1} \pi^{3n/2}|\xi|^{2-n}\nu^{n-2}\lambda}{C(\lambda)|\xi_n|} \hat{f}\left(\dfrac{\xi_1}{\lambda},\dfrac{\tilde{\xi}}{\nu },\dfrac{\xi_n}{  \nu }\right)\hat{\psi}\left(\dfrac{\xi_1}{\lambda},\dfrac{\tilde{\xi}}{\nu },\dfrac{\xi_n}{  \nu }\right) \\
&=\displaystyle \frac{2^{3n/2+1} \pi^{3n/2}|\xi|^{2-n}\nu^{n-2}\lambda}{C(\lambda)|\xi_n|} \widehat{f*\phi}\left(\dfrac{\xi_1}{\lambda},\dfrac{\tilde{\xi}}{\nu },\dfrac{\xi_n}{  \nu }\right)\\
&=\displaystyle \dfrac{(2\pi)^{n}}{C(\lambda)}H_n\left(\dfrac{1}{t^{n-1}} \dfrac{\partial}{\partial t}R_E\widehat{f*\phi}\right)\left(\dfrac{\xi_1}{\lambda},\dfrac{\tilde{\xi}}{\nu },|\xi|\right),
\end{array}
$$
which proves our assertion.

\end{proof}
%%%%%%%%%%
\section{A different inversion method}\label{reddinginversion}
%%%%%%%%%%%%

In this section, we provide a different inversion formula for the elliptical Radon transform.  To obtain this formula, we start to take a transform, which is like the Fourier transform, but with kernel $e^{i\omega t^2}$ instead of $e^{i\omega t}$, of the derivative of $R_Ef$ in $t$. To get the Fourier transform of $f$ from this transform, we change variables.

We start from formula~\eqref{differREF}.
Let us define 
 $$
 \begin{array}{rl}
 G(u,w)&:=\displaystyle\int\limits^\infty_0 \frac{\partial}{\partial t}R_{\mathcal{}E} f(u,t)e^{iwt^2}dt\\
&=\displaystyle C(\lambda)\int\limits^\infty_0 t^{n-1} \int\limits_{|y|=1} f(\lambda t y_1+u_1,\nu t\tilde{y}+\tilde{u},t\nu y_n)e^{iwt^2} d\sigma(y) dt\\
&=\displaystyle C(\lambda) \int\limits_{\RR^{n-1}} f(\lambda y_1+u_1,\nu \tilde{y}+\tilde{u},\nu y_n) e^{iw|y|^2}dy,
   \end{array}
$$
where in the last equality we switched from polar to Cartesian coordinates.
\begin{thm}
Let $f\in C^\infty_c(\RR^n)$ with $f(x',x_n)=f(x',-x_n)$.
Then we have% can be reconstructed by elliptical Radon transform.
$$
f(x)=\displaystyle\dfrac{x_n }{(2\pi)^n C(\lambda)^2}\intL_{\RR^n} e^{-i\frac{|\alpha|^2}{4\gamma}} e^{i\alpha\cdot \left(\frac{x_1}{\lambda},\frac{\tilde{x}}{\nu}\right)}e^{-i\gamma \left(\frac{x_1^2}{\lambda^2}+\frac{|\tilde{x}|^2}{\nu^2}+\frac{x_n^2}{\nu^2}\right)}G\left(\dfrac{\alpha_1\lambda}{2\gamma},\dfrac{\alpha'\nu}{2\gamma},\gamma\right) d\alpha d\gamma,
$$
 for $x_n>0$, where $C(\lambda)=\lambda\nu^{n-1}$, as before.

 \end{thm}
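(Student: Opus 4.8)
The plan is to invert the map $f\mapsto G$ by recognizing the integral on the right as a disguised inverse Fourier transform. First I would rewrite $G$ at the arguments occurring in the statement. Starting from $G(u,w)=C(\lambda)\int_{\RR^n}f(\lambda y_1+u_1,\nu\tilde y+\tilde u,\nu y_n)\,e^{iw|y|^2}\,dy$ and changing variables via $x=(\lambda y_1+u_1,\nu\tilde y+\tilde u,\nu y_n)$ (Jacobian $C(\lambda)$), one obtains
\[
G(u,w)=\int_{\RR^n}f(x)\,\exp\!\Big(iw\big[\tfrac{(x_1-u_1)^2}{\lambda^2}+\tfrac{|\tilde x-\tilde u|^2}{\nu^2}+\tfrac{x_n^2}{\nu^2}\big]\Big)\,dx .
\]
Now set $w=\gamma$ and $u=\big(\tfrac{\alpha_1\lambda}{2\gamma},\tfrac{\alpha'\nu}{2\gamma}\big)$ and expand the quadratic form: the terms involving only $u$ sum to $i|\alpha|^2/(4\gamma)$, the mixed terms sum to $-i\alpha\cdot(x_1/\lambda,\tilde x/\nu)$, and the terms involving only $x$ leave the anisotropic phase $e^{i\gamma Q(x)}$ with $Q(x)=\tfrac{x_1^2}{\lambda^2}+\tfrac{|\tilde x|^2}{\nu^2}+\tfrac{x_n^2}{\nu^2}$. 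After one further linear change of variables $v=(x_1/\lambda,\tilde x/\nu,x_n/\nu)$ (Jacobian $C(\lambda)$, $Q(x)=|v|^2$) this reads
\[
e^{-i|\alpha|^2/(4\gamma)}\,G\!\Big(\tfrac{\alpha_1\lambda}{2\gamma},\tfrac{\alpha'\nu}{2\gamma},\gamma\Big)=C(\lambda)\int_{\RR^n}f(\lambda v_1,\nu\tilde v,\nu v_n)\,e^{i\gamma|v|^2}\,e^{-i\alpha\cdot v'}\,dv ,
\]
where $v'=(v_1,\tilde v)\in\RR^{n-1}$ and $|v|^2=|v'|^2+v_n^2$.

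Next I would substitute this into the right-hand side of the theorem. The factors $e^{\pm i|\alpha|^2/(4\gamma)}$ cancel, and—after interchanging the order of integration, which I comment on below—one is left with an iterated integral whose inner $\alpha$-integral is $\int_{\RR^{n-1}}e^{i\alpha\cdot((x_1/\lambda,\tilde x/\nu)-v')}\,d\alpha=(2\pi)^{n-1}\,\delta\big((x_1/\lambda,\tilde x/\nu)-v'\big)$ and whose inner $\gamma$-integral is $\int_{\RR}e^{i\gamma(|v|^2-Q(x))}\,d\gamma=2\pi\,\delta(|v|^2-Q(x))$. The first $\delta$ forces $v'=(x_1/\lambda,\tilde x/\nu)$, i.e.\ $\lambda v_1=x_1$, $\nu\tilde v=\tilde x$, whence $|v|^2-Q(x)=v_n^2-x_n^2/\nu^2$; for $x_n>0$ the second $\delta$ then becomes $2\pi\,\delta(v_n^2-x_n^2/\nu^2)=\tfrac{\pi\nu}{x_n}\big(\delta(v_n-\tfrac{x_n}{\nu})+\delta(v_n+\tfrac{x_n}{\nu})\big)$. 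Carrying out the remaining $v_n$-integration and using the evenness $f(x',x_n)=f(x',-x_n)$ collapses the two delta masses into $\tfrac{2\pi\nu}{x_n}f(x)$; keeping track of the Jacobians $C(\lambda)$ and of the outer prefactor then produces $f(x)$ for $x_n>0$, and evenness extends the identity to all $x$.

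The only real obstacle is analytic rather than algebraic: the $\alpha$- and $\gamma$-integrations above converge only conditionally, so the delta-function manipulation has to be justified. I would do this by the usual regularization—insert Gaussian cutoffs $e^{-\varepsilon|\alpha|^2}$, $e^{-\varepsilon\gamma^2}$, apply Fubini to the now absolutely convergent integral (legitimate because $f\in C^\infty_c(\RR^n)$), and pass to the limit $\varepsilon\downarrow0$. An equivalent and arguably cleaner route organizes the same computation as two successive Fourier inversions: the $(n-1)$-dimensional inverse Fourier transform in $\alpha$ recovers $v'\mapsto e^{i\gamma|v'|^2}\int_{\RR}f(\lambda v_1,\nu\tilde v,\nu v_n)\,e^{i\gamma v_n^2}\,dv_n$, and, after dividing by $e^{i\gamma|v'|^2}$, a one-dimensional inverse Fourier transform in $\gamma$ recovers $f(\lambda v_1,\nu\tilde v,\nu v_n)$ for $v_n>0$—here one uses that the substitution $\tau=v_n^2$ turns $\gamma\mapsto\int_{\RR}f(\lambda v_1,\nu\tilde v,\nu v_n)e^{i\gamma v_n^2}\,dv_n$ into an honest Fourier transform of $\tau\mapsto f(\lambda v_1,\nu\tilde v,\nu\sqrt\tau)/\sqrt\tau$ (set to $0$ for $\tau<0$), which is integrable and smooth away from $\tau=0$ because $f$ is smooth and even in $x_n$, so that Fourier inversion is valid at every $\tau>0$. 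Renaming $x=(\lambda v_1,\nu\tilde v,\nu v_n)$, so $v'=(x_1/\lambda,\tilde x/\nu)$ and $v_n=x_n/\nu$, and collecting constants then gives the displayed formula.
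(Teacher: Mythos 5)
Your proposal is correct in substance and follows essentially the same route as the paper: both rewrite $G(u,w)$ as $\int_{\RR^n}f(x)e^{iw[(x_1-u_1)^2/\lambda^2+|\tilde x-\tilde u|^2/\nu^2+x_n^2/\nu^2]}dx$, complete the square so the $u$-dependence becomes a linear Fourier phase, and linearize the remaining $e^{i\gamma v_n^2}$ factor by the substitution $\tau=v_n^2$ (this is exactly the paper's change of variables $r=Q(x)$ and its auxiliary function $k$, and your ``cleaner route'' of two successive Fourier inversions is the paper's single $n$-dimensional inversion of $k$ done iteratively), with evenness in $x_n$ collapsing the two branches of the square root. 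The one unverified claim is that the constants ``produce $f(x)$'': carrying your own delta factors through gives $(\nu/C(\lambda))\,f(x)$ rather than $f(x)$, but the same factor $\lambda\nu^{n-2}$ is lost in the paper's bookkeeping (e.g.\ in the step $f(x)=\tfrac{x_n}{C(\lambda)}k(\cdot)$, which should read $f(x)=\tfrac{2x_n}{\nu}k(\cdot)$), so this points to a slip in the stated constant rather than a flaw in your method.
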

 \begin{proof}
Making the change of variable $x_1=\lambda t y_1+u_1,\tilde{x}=\nu t\tilde{y}+\tilde{u},x_n=t\nu y_n$, we get
 \begin{equation*}
  \begin{array}{ll}
 G(u,w)&=\displaystyle \int\limits_{\RR^n} f(x)e^{iw\left(\frac{(x_1-u)^2}{\lambda^2}+\frac{|\tilde{x}-\tilde{u}|^2}{\nu^2}+\frac{x_n^2}{\nu^2}\right)}dx\\
 &=e^{iw\frac{u^2}{\lambda^2}}e^{iw\frac{|\tilde{u}|^2}{\nu^2}}\displaystyle \int\limits_{\RR^n} f(x)e^{iw\left(\frac{x_1^2}{\lambda^2}+\frac{|\tilde{x}|^2}{\nu^2}+\frac{x_n^2}{\nu^2}\right)} e^{-2iwu_1\frac{x_1}{\lambda^2}}e^{-2iw\frac{\tilde{u}\cdot \tilde{x}}{\nu^2}}dx,
  \end{array}
 \end{equation*}
 where $x=(x_1,\tilde{x},x_n)$ and $u=(u_1,\tilde{u})\in \RR^{n-1}$.
 Next, make the change of variables 
 $$
 \mathsf{x}_{1}=\dfrac{x_1}{\lambda},\qquad \tilde{\mathsf x}=\dfrac{\tilde{x}}{\nu },\qquad \mbox{ and }\qquad r=\dfrac{x^2}{\lambda^2}+\dfrac{|\tilde{x}|^2}{\nu^2}+\dfrac{x_n^2}{\nu^2},
 $$
  so that 
 \begin{equation*}
  \begin{array}{ll}
 x_1=\mathsf x_1\lambda,\qquad \tilde{x}=\tilde{\mathsf x}\nu ,\qquad\mbox{ and }\qquad x_n=\nu \sqrt{r-\mathsf x^2_1-|\tilde{\mathsf x}|^2}.
  \end{array}
 \end{equation*}
 The Jacobian of this transformation is
 $$
% \begin{array}{ll}
 J=\left|\begin{array}{cccc}\lambda  & 0 &\cdots & 0\\ 0 &\nu  &\cdots &0 \\ \vdots& \vdots &\ddots& \vdots \\  \frac{-\mathsf x_{1}\nu }{2\sqrt{r-\mathsf x^2_1-\tilde{\mathsf x}^2}} &\frac{-\tilde{\mathsf x}\nu }{2\sqrt{r-\mathsf x^2_1-|\tilde{\mathsf x}|^2}} &\cdots &\frac{\nu }{2\sqrt{r-\mathsf x^2_1-|\tilde{\mathsf x}|^2}}\end{array}\right|=\dfrac{C(\lambda)}{2\sqrt{r-\mathsf x^2_1-|\tilde{\mathsf x}|^2}}
% \end{array}
 $$
 so that
 $$
 dx=\dfrac{C(\lambda)}{2\sqrt{r-\mathsf x^2_1-|\tilde{\mathsf x}|^2}}dx_1d\tilde{x} dr.
 $$
 Let the function $k(x,\tilde{x},r)$ be defined by
 \begin{equation*}
  k(x,\tilde{x},r)=\left\{\begin{array}{ll}\dfrac{f(\lambda x_1,\nu \tilde{x}, \nu \sqrt{r-x_1^2-|\tilde{x}|^2})}{2\sqrt{r-x_1^2-|\tilde{x}|^2}}&0<|x_1|^2+|\tilde{x}|^2<r,\\
                 0 & \mbox{otherwise}.\end{array}\right. 
 \end{equation*}
 Since $f$ is even in $x_n$, it is sufficient to consider the positive root of $\sqrt{r-x_1^2-|\tilde{x}|^2}.$
 Then we can rewrite $G(u,w)$ as
  \begin{equation*}
  \begin{array}{rl}
 G(u,w)&=C(\lambda) e^{iw\frac{u_1^2}{\lambda^2}}e^{iw\frac{|\tilde{u}|^2}{\nu^2}}\displaystyle \intL_{\RR^n} k(x_1,\tilde{x},r)e^{iwr}e^{-2i\frac{wx_1u}{ \lambda}}e^{-2i\frac{w\tilde{x}\cdot \tilde{u}}{\nu}}dx_1d\tilde{x}dr\\\
 &=C(\lambda) e^{iw\frac{u_1^2}{\lambda^2}}e^{iw\frac{|\tilde{u}|^2}{\nu^2}} K\left(2\dfrac{wu_1}{\lambda},2\dfrac{w\tilde{u}}{\nu},-w\right),
  \end{array}
 \end{equation*}
 where for $\alpha=(\alpha_1,\alpha')\in \RR\times\RR^{n-2}$,
 \begin{equation*}
  \begin{array}{ll}
 K(\alpha,\gamma)&=\displaystyle\intL_{\RR^n} e^{-i\alpha\cdot(x_1,\tilde{x})}e^{-i\gamma r}k(x_1,\tilde{x},r)dx_1d\tilde{x}dr\\\
 &=\dfrac{1}{C(\lambda)}e^{i\frac{|\alpha|^2}{4\gamma}}G\left(\dfrac{-\alpha_1\lambda}{2\gamma},\dfrac{-\alpha'\nu}{2\gamma},-\gamma\right).  
  \end{array}
 \end{equation*}

 Since $k(x,\tilde{x},r)$ is
 $$\dfrac{1}{(2\pi)^n}\displaystyle\intL_{\RR^n} e^{i\alpha_1 x_1}e^{i\alpha'\cdot \tilde{x}}e^{i\gamma r}K(\alpha,\gamma)d\alpha d\gamma,$$
 we get for $x_n>0$,
 \begin{equation}\label{redding}
  \begin{array}{ll}
   f(x)&=\dfrac{x_n }{C(\lambda)} k\left(\dfrac{x_1}{\lambda},\dfrac{\tilde{x}}{\nu},\dfrac{x_1^2}{\lambda^2}+\dfrac{|\tilde{x}|^2}{\nu^2}+\dfrac{x_n^2}{\nu^2}\right)\\\
 &=\displaystyle\dfrac{x_n }{(2\pi)^nC(\lambda)}\intL_{\RR^n} e^{i\alpha_1 \frac{x_1}{\lambda}}e^{i\alpha'\cdot \frac{ \tilde{x}}{\nu}}e^{i\gamma \left(\frac{x_1^2}{\lambda^2}+\frac{|\tilde{x}|^2}{\nu^2}+\frac{x_n^2}{\nu^2}\right)}K(\alpha,\gamma)d\alpha d\gamma\\\
&=\displaystyle\dfrac{x_n }{(2\pi)^n C(\lambda)^2}\intL_{\RR^n} e^{-i\frac{|\alpha|^2}{4\gamma}} e^{i\alpha\cdot \left(\frac{x_1}{\lambda},\frac{\tilde{x}}{\nu}\right)}e^{-i\gamma \left(\frac{x_1^2}{\lambda^2}+\frac{|\tilde{x}|^2}{\nu^2}+\frac{x_n^2}{\nu^2}\right)}G\left(\dfrac{\alpha_1\lambda}{2\gamma},\dfrac{\alpha'\nu}{2\gamma},\gamma\right) d\alpha d\gamma.
  \end{array}
 \end{equation}
\end{proof}
%%%%%%%%%%%%%%%%%%%%%%%%%%%%%%%%%%%%%%%
\section{Stability estimate}\label{estimate}
%%%%%%%%%%%%%%%%%%%%%%%%%%%%%%%%%%%%%%%%%%%%%%%
In this section, we obtain a stability estimate for the elliptical Radon transform. Let $\mathcal{H}^{\gamma}(\RR^n)$ be a regular Sobolev space with a norm
$$
||f||^2_{\gamma}:=\int\limits_{\RR^n}|\hat{f}(\xi)|^2(1+|\xi|^2)^\gamma d\xi.
$$
Let us define $\mathcal{H}^\gamma_e(\RR^n)=\{f\in \mathcal{H}^\gamma(\RR^n): f \mbox{ is even in }x_n\}$ and let $L^2_{n-1}(\RR^{n-1}\times[0,\infty))$ be the set of a fucntion $g$ on $\RR^{n-1}\times[0,\infty)$ with
$$
||g||^2:=\intL_{\RR^{n-1}}\intL\half |g(u,t)|^2t^{n-1}dtdu<\infty.
$$
Then $L^2_{n-1}(\RR^{n-1}\times[0,\infty))$ is a Hilbert space. Also, by the Plancherel formula, we have $||g||=(2\pi)^{2n-1}||\tilde g||$, where
$$
\tilde g(\xi,|\zeta|)=\intL_{\RR^{n-1}}\intL_{\RR^{n}}g(u,|w|)e^{-i(u,w)\cdot(\xi,\zeta)}dpdw.
$$
Let $\mathcal H^\gamma(\RR^{n-1}\times[0,\infty))$ be the set of a function $g\in L^2_{n-1}(\RR^{n-1}\times[0,\infty))$ with $||g||_{\gamma}<\infty$, where
$$
||g||_\gamma^2:=\intL_{\RR^{n-1}}\intL\half|\tilde g(\xi',\eta)|^2(1+|\xi'|^2+|\eta|^2)^\gamma \eta^{n-1}d\eta d\xi'.
$$
\begin{thm}\label{norm}
For $\gamma\geq0$, there is a constant $C_n$ such that $f\in \mathcal H^\gamma_e(\RR^n)$,
$$
||f||_\gamma\leq C_n||t^{1-n}\partial_tR_Ef||_{\gamma+(n-1)/2}.
$$
\end{thm}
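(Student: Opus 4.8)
The plan is to deduce the stability estimate from the Fourier slice identity \eqref{fourier} of Theorem~\ref{inversionfourier}, by relating the weighted Sobolev norm of $f$ to the weighted Sobolev norm of $g := t^{1-n}\partial_t R_E f$ in frequency space. Writing $h(u,t) := t^{1-n}\partial_t R_E f(u,t)$, I would first observe that the quantity $\|h\|_{\gamma + (n-1)/2}$ is, up to a fixed constant, the $L^2$-norm of $\tilde h(\xi',\eta)(1+|\xi'|^2+|\eta|^2)^{(\gamma+(n-1)/2)/2}\eta^{(n-1)/2}$, where $\tilde h$ is the mixed Fourier (in $u$) / radial Hankel (in $t$) transform appearing in the definition of $\|\cdot\|_\gamma$ on $\RR^{n-1}\times[0,\infty)$. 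The key point is that $\tilde h$ is exactly the object computed in \eqref{eq:fourier}: evaluated along the ``slice'' $\rho = |\xi|$, formula \eqref{fourier} gives
$$
H_n\!\left(\tfrac{1}{t^{n-1}}\tfrac{\partial}{\partial t}\widehat{R_E f}\right)\!\left(\tfrac{\xi_1}{\lambda},\tfrac{\tilde\xi}{\nu},|\xi|\right) = \frac{2^{n/2+1}\pi^{n/2}|\xi|^{2-n}}{|\xi_n|}\,\lambda\nu^{n-2}\,\hat f\!\left(\tfrac{\xi_1}{\lambda},\tfrac{\tilde\xi}{\nu},\tfrac{\xi_n}{\nu}\right),
$$
so that, after the linear change of variables $\xi \mapsto (\xi_1/\lambda, \tilde\xi/\nu, \xi_n/\nu)$ inside an $L^2$ integral, one obtains a pointwise relation between $|\hat f(\xi)|$ and the value of $|\tilde h|$ on the corresponding slice, with an explicit algebraic weight of the form $c\,|\xi|^{n-2}|\xi_n|$ times a Jacobian constant depending only on $\lambda,\nu,n$.

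Next I would set up the two norms carefully. On the $f$ side, $\|f\|_\gamma^2 = \int_{\RR^n}|\hat f(\xi)|^2(1+|\xi|^2)^\gamma\,d\xi$; since $f$ is even in $x_n$ this integral may be restricted to $\xi_n>0$ at the cost of a factor $2$. On the data side, I parametrize the slice: the radial Hankel transform of $h(u,\cdot)$ evaluated at radius $|\xi|$ corresponds to the polar decomposition $(\xi',\eta)$ with $\eta = |\xi|$, but the relevant slice in \eqref{fourier} couples $|\xi'|$ and $|\xi|$ so that $\eta^2 = |\xi'|^2 + \xi_n^2$; hence integrating the square of $\tilde h$ against $\eta^{n-1}d\eta\,d\xi'$ and reintroducing the angular variables of $\xi'$ reproduces, up to constants and the Jacobian of the substitution $\xi\mapsto(\xi_1/\lambda,\tilde\xi/\nu,\xi_n/\nu)$, an integral over $\RR^n$ of $|\hat f(\xi)|^2$ times $|\xi|^{2(2-n)}|\xi_n|^{-2}\cdot(\text{weights})$. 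Combining with the extra $\eta^{n-1} = |\xi|^{n-1}$ factor in $\|h\|^2$ and the Sobolev weight $(1+|\xi|^2)^{\gamma+(n-1)/2}$, the powers of $|\xi|$ and $|\xi_n|$ should be arranged so that
$$
\|f\|_\gamma^2 \;\le\; C_n^2\int_{\RR^n}|\hat f(\xi)|^2(1+|\xi'|^2+\xi_n^2)^{\gamma+(n-1)/2}\,|\xi|^{?}\,|\xi_n|^{?}\,d\xi
$$
with the missing exponents forced to be exactly those that appear in $\|h\|_{\gamma+(n-1)/2}^2$ after the change of variables. The factor $(1+|\xi|^2)^{(n-1)/2}$ on the data side is precisely what is needed to absorb the loss of $(n-1)/2$ derivatives; concretely, $(1+|\xi|^2)^\gamma \le (1+|\xi|^2)^{\gamma+(n-1)/2}$ handles the low-frequency part, and at high frequency the algebraic weight $|\xi|^{2-n}$ combined with $|\xi|^{n-1}$ from the measure gives a net $|\xi|^{1}$ which, together with $|\xi_n|^{-1}$, is bounded by $(1+|\xi|^2)^{(n-1)/2}$ only after using $|\xi_n|\le|\xi|$ — this is where the estimate genuinely costs $(n-1)/2$ derivatives rather than fewer.

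The main obstacle I anticipate is bookkeeping the weight $|\xi_n|^{-1}$ that appears in \eqref{fourier}: it is singular on the hyperplane $\xi_n = 0$, and one must check that after the change of variables and the insertion of the measure factors $\eta^{n-1}d\eta$ (with $\eta^2 = |\xi'|^2+\xi_n^2$) this singularity is integrable and in fact cancels against a compensating factor of $|\xi_n|$ or $\eta$ coming from the Jacobian of the polar-to-Cartesian passage in the $\xi_n$ variable. I would handle this by writing the data-norm integral in the coordinates $(\xi',\xi_n)$ from the start — substituting $\eta\,d\eta = \xi_n\,d\xi_n$ at fixed $\xi'$ — so that one factor of $\xi_n$ from the measure exactly kills the $|\xi_n|^{-1}$ from \eqref{fourier}, leaving a clean, non-singular identity
$$
\|h\|_{\gamma+(n-1)/2}^2 = c_n \int_{\RR^n} |\hat f(\xi)|^2\,(1+|\xi|^2)^{\gamma+(n-1)/2}\,|\xi|^{2(2-n)}\,|\xi|^{n-1}\,d\xi
$$
(modulo the linear change $\xi\mapsto(\xi_1/\lambda,\tilde\xi/\nu,\xi_n/\nu)$, whose Jacobian is the constant $C(\lambda)$), and then the desired inequality follows from the elementary pointwise bound $|\xi|^{2(2-n)+(n-1)} = |\xi|^{3-n}\le 1 \cdot (1+|\xi|^2)^{\,?}$ — or its reciprocal, as the sign of $3-n$ dictates — absorbed into the Sobolev weight, with $C_n$ collecting all the explicit constants. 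Once the identity above is in hand the theorem is immediate; essentially all the work is in justifying the changes of variables and the $\xi_n=0$ cancellation, neither of which presents a real difficulty given the smoothness and compact support of $f$.
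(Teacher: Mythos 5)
Your proposal follows essentially the same route as the paper: combine the Fourier slice identity \eqref{fourier} (via \eqref{backfourier}) with the change of variables $\eta=|(\lambda\xi_1,\nu\tilde\xi,\nu\xi_n)|$, i.e. $\eta\,d\eta=\nu^2\xi_n\,d\xi_n$, and compare the resulting weights pointwise against those in the norm $\|\cdot\|_{\gamma+(n-1)/2}$. One small correction: the Jacobian supplies only one power of $\xi_n$ against the $|\xi_n|^{-2}$ coming from squaring \eqref{fourier}, so a factor $|\xi_n|^{-1}$ survives rather than cancelling exactly; this is harmless because $|\xi_n|\le C|\xi|$ (in the paper's bookkeeping, the surviving factor is $(\eta^2-\lambda^2\xi_1^2-\nu^2|\tilde\xi|^2)^{1/2}\le\eta$), which is precisely the bound you invoke earlier in your sketch, so the argument closes as you intend.
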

\begin{proof}
Let $g=t^{1-n}\partial_tR_Ef$.
Note that from equation~\eqref{backfourier}, we have
\begin{equation}\label{eq:hatandtilde}
\widehat{R^*_Eg}\left(\frac{\xi_1}\lambda,\frac{\tilde \xi}\nu,\frac{\xi_n}\nu \right)=C(\lambda)\intL_{\RR^{n-1}}e^{-iu\cdot\left(\frac{\xi_1}\lambda,\frac{\tilde \xi}\nu\right)}\intL_{\RR^n}e^{-ix\cdot \xi}g(u,|x|)dxdu=C(\lambda)\tilde g\left(\frac{\xi_1}\lambda,\frac{\tilde \xi}\nu,|\xi|\right).
\end{equation}
Combining this equation and Theorem~\ref{inversion}, we have 
$$
\hat{f}(\xi)=\dfrac{|(\lambda\xi_1,\nu \tilde{\xi},\nu \xi_n)|^{n-2}|\nu\xi_n|}{2^{n+1}\pi^nC(\lambda)}\tilde g\left(\xi_1,\tilde \xi,|(\lambda\xi_1,\nu\tilde \xi,\nu\xi_n)|\right).
$$
Hence, we have
$$
\begin{array}{ll}
||f||_\gamma^2&\displaystyle=\intL_{\RR^n}(1+|\xi|^2)^\gamma|\hat f(\xi)|^2 d\xi\\
&\displaystyle=\dfrac{1}{2^{2n+2}\pi^{2n}C(\lambda)^2}\intL_{\RR^n}|(\lambda\xi_1,\nu \tilde{\xi},\nu \xi_n)|^{2n-4}|\nu\xi_n|^2(1+|\xi|^2)^\gamma |\tilde g(\xi_1,\tilde \xi,|(\lambda\xi_1,\nu\tilde \xi,\nu\xi_n)|)|^2d\xi\\
&\displaystyle\leq C_n\intL_{\RR^n}|(\lambda\xi_1,\nu \tilde{\xi},\nu \xi_n)|^{2n-4}|\nu\xi_n|^2(1+|(\lambda\xi_1,\nu \tilde{\xi},\nu \xi_n)|^2)^\gamma |\tilde g(\xi_1,\tilde \xi,|(\lambda\xi_1,\nu\tilde \xi,\nu\xi_n)|)|^2d\xi\\
&\displaystyle\leq C_n\intL_{\RR^{n-2}}\intR\intL^\infty_0(\eta^2-\lambda^2\xi_1^2-\nu^2|\tilde\xi|^2)^\frac12\eta^{2n-3}(1+\eta^2)^\gamma |\tilde g(\xi_1,\tilde \xi,\eta)|^2d\eta d\xi_1d\tilde\xi.
\end{array}
$$
In the last line, we change the variable $\xi_n$ to $\eta=|(\lambda\xi_1,\nu \tilde{\xi},\nu \xi_n)|$.
\end{proof}
\section{Uniqueness for the local problem}\label{unique}
%%%%%%%%%%%%%%%%%%%%%%%%%%%%%%%%%%%%%%%%%%%%%%%%%%

Theorem~\ref{inversion} implies that an even function $f\in C^\infty_c(\RR^n)$ is uniquely determined by $R_Ef$. The question arises if $f$ is uniquely determined by some partial information. The approach in this section is similar to the one in~\cite{andersson88}.
\begin{thm}
Let $u^0\in\RR^{n-1}$, $\epsilon>0$, and $T>0$ be arbitrary.
Let $f\in C^\infty_c(\RR^n)$ be even in $x_n$ and suppose $g=R_Ef$ is equal to zero on the open set 
$$
U_{T,\epsilon}=\{(u,t)\in\RR^{n-1}\times[0,\infty):|u-u^0|<\epsilon,0\leq t<T\}.
$$
Then $f$ equals zero on the open set 
$$
V_T=\left\{x\in\RR^n:\frac{(x_1-u^0_{1})^2}{\lambda^2}+\frac{(\tilde{x}-\tilde{u}^0)^2}{\nu^2}+\frac{x_n^2}{\nu^2}<T^2\right\}.
$$
Here $x=(x_1,\tilde{x},x_n)\in\RR^n$ and $u^0=(u^0_{1},\tilde{u}^0)\in\RR^{n-1}$.
Also, $g$ is equal to zero on the open cone 
$$
W_T=\{(u,t)\in\RR^{n-1}\times[0,T):|u-u_0|+ t<T\}.
$$
\end{thm}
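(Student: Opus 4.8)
The plan is to reduce the theorem to a statement about spherical means and then invoke finite speed of propagation for the wave equation, following the scheme of~\cite{andersson88}. First I would rescale the ellipsoids into balls: set $\Phi(x):=(x_1/\lambda,\tilde x/\nu,x_n/\nu)$ and $F(\mathsf x):=f(\lambda\mathsf x_1,\nu\tilde{\mathsf x},\nu\mathsf x_n)$, so that $F\in C_c^\infty(\RR^n)$ is again even in $\mathsf x_n$. By~\eqref{differREF},
$$
t^{1-n}\tfrac{\partial}{\partial t}R_Ef(u,t)=C(\lambda)\int_{|y|=1}F(\mathsf u+ty)\,d\sigma(y),\qquad \mathsf u:=\Phi(u,0)=(u_1/\lambda,\tilde u/\nu,0),
$$
i.e.\ up to the factor $C(\lambda)|S^{n-1}|$ this is the spherical mean $M_F(\mathsf u,t)$ of $F$ over the sphere of radius $t$ about the point $\mathsf u$ of the hyperplane $\{\mathsf x_n=0\}$. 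Under $\Phi$ the set $V_T$ is exactly the ball $B_T(\mathsf u^0)$ with $\mathsf u^0:=\Phi(u^0,0)$, and the hypothesis $g=R_Ef=0$ on the open set $U_{T,\epsilon}$ (differentiate in $t$) forces $M_F(\mathsf u,t)=0$ whenever $|u-u^0|<\epsilon$ and $0\le t<T$; since $\Phi$ is a linear isomorphism, the image of $\{|u-u^0|<\epsilon\}$ contains a ball $\{|\mathsf u-\mathsf u^0|<\epsilon'\}$ on the hyperplane for some $\epsilon'>0$.

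Next I would pass to the wave equation. Let $w(\mathsf x,t)$ be the unique smooth solution of $\partial_t^2w=\Delta_{\mathsf x}w$ on $\RR^n\times\RR$ with $w|_{t=0}=F$ and $\partial_tw|_{t=0}=0$. By Kirchhoff's formula for odd $n$ and Poisson's descent formula for even $n$, $w(\mathsf x,t)$ is an explicit integral — local in $s\in[0,|t|]$ and at the fixed spatial point $\mathsf x$ — of the spherical means $M_F(\mathsf x,s)$, and conversely $M_F(\mathsf x,\cdot)$ is recovered causally from $w(\mathsf x,\cdot)$. Since $M_F(\mathsf u,s)=0$ for all $\mathsf u$ on $\{\mathsf x_n=0\}$ with $|\mathsf u-\mathsf u^0|<\epsilon'$ and all $0\le s<T$, it follows that $w$ vanishes on the space-time patch $\Gamma:=\{(\mathsf x,t):\mathsf x_n=0,\ |\mathsf x-\mathsf u^0|<\epsilon',\ |t|<T\}$ of the timelike, hence noncharacteristic, hyperplane $\{\mathsf x_n=0\}$. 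Moreover $F$, and hence $w$, is even in $\mathsf x_n$, so $\partial_{\mathsf x_n}w\equiv0$ on $\{\mathsf x_n=0\}$. Thus $w$ has vanishing Cauchy data across $\{\mathsf x_n=0\}$ on the whole patch $\Gamma$.

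The final step is to propagate that vanishing. Holmgren's uniqueness theorem, applied across the noncharacteristic hyperplane $\{\mathsf x_n=0\}$, gives $w\equiv0$ in a full space-time neighborhood of $\Gamma$; iterating across neighboring noncharacteristic hypersurfaces — equivalently, using finite speed of propagation for $\partial_t^2-\Delta_{\mathsf x}$ exactly as in~\cite{andersson88} — yields $w\equiv0$ on the cone $\{(\mathsf x,t):|t|+|\mathsf x-\mathsf u^0|<T\}$. Setting $t=0$ gives $F\equiv0$ on $B_T(\mathsf u^0)$, i.e.\ $f\equiv0$ on $V_T$, which is the first assertion. The second assertion then follows immediately: for $(u,t)\in W_T$ the triangle inequality in the $\Phi$-rescaled coordinates, in which $E_{u,t}$ and $V_T$ are the balls $\Phi^{-1}(\overline{B_t(\mathsf u)})$ and $\Phi^{-1}(B_T(\mathsf u^0))$ with $\mathsf u=\Phi(u,0)$, gives $E_{u,t}\subseteq V_T$, whence $g(u,t)=R_Ef(u,t)=\int_{E_{u,t}}f=0$.

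The crux is the propagation step: one must pin down exactly the region into which the vanishing Cauchy data on the thin patch $\Gamma$ spreads under the wave operator — in particular, verifying that an arbitrarily small $\epsilon'$ still forces $F$ to vanish on the \emph{entire} ball of radius $T$ — and one must carry out the passage through spherical means to the wave equation uniformly in the parity of $n$. This is precisely the place where the argument parallels, and can invoke, \cite{andersson88}.
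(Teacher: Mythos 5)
Your proposal is essentially sound, but it takes a genuinely different route from the paper's. The paper adapts Andersson's \emph{polynomial} argument: differentiating $R_Ef$ in $u_i$ and in $t$ and using the divergence theorem shows that multiplying $f$ by a coordinate $x_i$ corresponds to applying the first-order local operator $D_i=C(\lambda)(t\nu\partial_{u_i}+u_i\partial_t)$ to $g=\tfrac{\partial}{\partial t}R_Ef$, hence $\tfrac{\partial}{\partial t}R_E(p(x')f)=p(D)g$ for every polynomial $p$. Since the $D_i$ are differential (local) operators, $g=0$ on $U_{T,\epsilon}$ forces all these polynomial moments of $f$ over each ellipsoid boundary $\partial E_{u,t}$, $(u,t)\in U_{T,\epsilon}$, to vanish; approximating $f$ itself uniformly by polynomials then gives $f=0$ on each such surface, and these surfaces sweep out $V_T$. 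That proof is elementary and self-contained (divergence theorem plus Weierstrass approximation). Your route --- rescale to spherical means of $F$, solve the wave equation with data $(F,0)$, use evenness to get vanishing Cauchy data on the timelike patch $\Gamma$ --- is a legitimate classical alternative and gives a cleaner conceptual picture (the statement becomes the ``hole theorem'' for spherical averages with centers on a hyperplane).

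The caveat is your propagation step, which you correctly identify as the crux but then dispose of incorrectly. Spreading zero Cauchy data from the timelike patch $\Gamma$ into the region $\{(\mathsf x,t):\mathrm{dist}(\mathsf x,\Gamma_0)+|t|<T\}$ is \emph{not} finite speed of propagation (that is a domain-of-dependence statement for data on a spacelike surface); it is unique continuation across a timelike surface, and the sharp region requires the global Holmgren/John theorem, i.e., a deformation of $\Gamma$ through a continuous family of noncharacteristic surfaces whose boundaries stay in the known zero set. That theorem is true and classical, but it is the entire mathematical content of your proof and cannot be waved through; moreover it is not what \cite{andersson88} supplies --- Andersson's local uniqueness argument is precisely the polynomial/moment one that the paper reproduces, so the citation does not cover the step you lean on. A smaller point: your final containment $E_{u,t}\subseteq V_T$ for $(u,t)\in W_T$ holds in the rescaled metric, where it reads $\sqrt{(u_1-u^0_1)^2/\lambda^2+|\tilde u-\tilde u^0|^2/\nu^2}+t<T$; deducing this from $|u-u^0|+t<T$ needs $\nu\geq 1$, a point the paper's own one-line treatment of $W_T$ also glosses over.
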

\begin{figure}
\centering
 \includegraphics[width=0.37\textwidth]{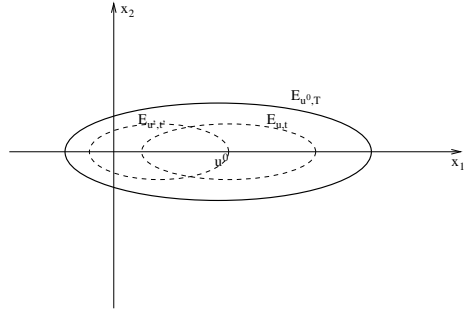} 
 \caption{Ellipses $E_{u^0,T},E_{u,t},E_{u',t'}$}      
\end{figure}

%We will prove the theorem.
\begin{proof}%[Proof of theorem]
Without loss of generality, we may assume $u^0=0$.
Let $f\in C^\infty(\RR^n)$.
Clearly, $g$ is also differentiable.
Differentiating $R_Ef(u,t)$ with respect to $u_i$ yields
$$
\begin{array}{ll}
\dfrac{\partial}{\partial u_i}R_E f(u,t)&=C(\lambda)\displaystyle \int\limits_{\RR^n}\chi_{|x|<t}\dfrac{\partial}{\partial u_i} f(\lambda x_1+u_1,\nu \tilde{x}+\tilde{u},\nu x_n) dx\\
&=C(\lambda)\displaystyle \frac{1}{t}\int\limits_{|x|=t}x_i\dfrac{\partial}{\partial x_i} f(\lambda x_1+u_1,\nu \tilde{x}+\tilde{u},\nu x_n) dx.
\end{array}
$$
Here we used equation~\eqref{ellipticalradon} and the divergence theorem.
Using equation~\eqref{differREF}, we get
$$
\begin{array}{ll}
\dfrac{\partial}{\partial t}R_E (x_i f)(u,t)&=C(\lambda)\displaystyle \int\limits_{|x|=t}(u_i+\nu x_i) f(\lambda x_1+u_1,\nu \tilde{x}+\tilde{u},\nu x_n) d\sigma(x)\\
&=C(\lambda)\displaystyle \left(t\nu \dfrac{\partial}{\partial u_i}g(u,t) +u_i\dfrac{\partial}{\partial t}g(u,t)\right).
\end{array}
$$
Let the linear operator $D_i$ be defined by $D_ig(u,t)=C(\lambda)(t\nu\partial_{u_i}g(u,t)+u_i\partial_tg(u,t))$.
Then $\frac{\partial}{\partial t}R_E (x_i f)(u,t)$ is $D_i g(u,t)$.
By iteration, we obtain $\frac{\partial}{\partial t}R_E(p(x')f)=p(D)g$ where $p$ is an $n-1$-variable polynomial.
If $g$ is zero in $U_{T,\epsilon}$, then $p(D)g$ is also zero in $U_{T,\epsilon}$.
Then we have for any point $(u,t)\in U_{T,\epsilon}$,
$$
\begin{array}{ll}
\dfrac{\partial}{\partial t}R_E (p(x') f)(u,t)&=C(\lambda)\displaystyle \int\limits_{|x|=t}p(u_1+\lambda x_1,\tilde{u}+\nu \tilde{x}) f(\lambda x_1+u_1,\nu \tilde{x}+\tilde{u},\nu x_n) d\sigma(x)\\
&=C(\lambda)\displaystyle \int\limits_{|y|<t}p(u+(\lambda y_1,\nu \tilde{y})) f(u+(\lambda y_1,\nu \tilde{y}),\nu \sqrt{t^2-|y|^2}) \dfrac{dy}{\sqrt{t^2-|y|}}=0.
\end{array}
$$
%is equal to zero
For fixed $u$ and $t$, choose a sequence of polynomials such that $p_i(u+(\lambda y_1,\nu \tilde{y}))$ converge to $ f(u+(\lambda y_1,\nu \tilde{y},\nu \sqrt{t^2-|y|^2}))$ uniformly for $|y|\leq t$ and $y=(y_1,\tilde{y})\in\RR^{n-1}$.
It follows that $f=0$ in $V_T$ and that $g=0$ in $W_T$.
\end{proof}
%\begin{cor}
%If $g=0$ in a strip $\{(u,t)\in\RR^{n-1}\times[0,\infty):|u-u_0|<\epsilon, 0\leq t<\infty\}$ then $f$ and $g$ are zero everywhere.
%\end{cor}

%%%%%%%%%%%%%%%%%%%%%%%%%%%%%%%%%%%%%%
\section{Conclusion}
%%%%%%%%%%%%%%%%%%%%%%%%%%%%

We describe two different ways of determining a function $f$ from its $n$-dimensional elliptical Radon transform $R_Ef$ arising in radio tomography imaging~\cite{wilsonp10,wilsonp09,wilsonpv09}. 
We also present a stability estimate and a local uniqueness results for $R_E$. 

\section*{Acknowledgements}
The author thanks P Kuchment and D Steinhauer for fruitful discussions.
This work was supported in part by US NSF Grants DMS 0908208 and DMS 1211463.
\end{document}